\newtheorem{theorem}{Theorem}[section]
\theoremstyle{definition}
\newtheorem{definition}{Definition}[section]
\newtheorem{remark}{Remark}[section]
\newtheorem{example}{Example}[section]
\title{$A$-coupled-expanding and distributional chaos}
\author[pyon]{Cholsan Kim}
\author[pyon]{Hyonhui Ju}
\author[vien]{Peter Raith}
\ead{Peter.Raith@univie.ac.at}
\address[pyon]{Department of Mathematics,
\textbf{Kim Il Sung} University, Pyongyang, D.P.R.Korea}
\address[vien]{Fakult\"at f\"ur Mathematik,
Universit\"at Wien, Nordbergstra{\ss}e~15,
1090 Wien, Austria}
\begin{document}

\begin{abstract}
The concept of $A$-coupled-expanding map, which is one of the more natural and useful ideas generalized the horseshoe map, is well known as a criterion of chaos. It is well known that distributional chaos is one of the concepts which reflect strong chaotic behaviour. In this paper, we focus the relations between $A$-coupled-expanding and distributional chaos. We prove two theorems that give sufficient conditions for a strictly $A$-coupled-expanding map to be distributionally chaotic in the
senses of two kinds, where $A$ is an $m\times m$ irreducible transition matrix.
\end{abstract}

\begin{keyword}
Chaos\sep coupled-expanding map\sep distributional chaos\MSC[2010]
37B10\sep 37B99
\end{keyword}

\maketitle

%%%%%%%%%%%%%%%%% 1. Introduction
\section{Introduction}

The concept of $A$-coupled-expanding map  which  has been recognized as one of the important criteria of chaos has been defined in \cite{shijuchen}. But when it comes to the $A$-coupled-expanding map, it has to be noted that this notion goes back to the notion of turbulence introduced by Block and Coppel in \cite{blockcop}, which has been considered as an important property of chaotic behaviour for one-dimensional dynamical system. A continuous map
$f:I\rightarrow I$, where $I$ is the unit interval, is said to be \emph{turbulent} if there exist closed nondegenerate subintervals $J$ and $K$ with pairwise disjoint interiors such that $f(J)\supset J\cup K, f(K)\supset J\cup K$. Furthermore, it is said to be \emph{strictly turbulent} if the subintervals $J$ and $K$ can be chosen disjoint.
Actually, essentially the same concept was studied in one-dimensional dynamical system before by Misiurewicz in \cite{misirewhsh} and \cite{misirehsbr}. He called this property ``horseshoe'', because it is similar to the Smale's horseshoe effect. Let $f:I\rightarrow I$ be an interval map and $J_1, {\dots}, J_n$ be nondegenerate subintervals with pairwise disjoint interiors such that
$J_1\cup\dots\cup J_n\subset f(J_i)$ for $i=1, {\dots}, n$.
Then $(J_1, {\dots}, J_n)$ is called an \emph{$n$-horseshoe}, or simply a \emph{horseshoe} if $n\geq2$.

This concept has been more generalized for general metric spaces.
In \cite{shiyuturb} it has been extended from the concept of turbulence for continuous
interval maps to maps in general metric spaces. Since the term ``turbulence'' is
well-established in fluid mechanics,  they changed the term ``turbulence'' to
``coupled-expansion'' (\cite{shichencep}, \cite{shiyusbrep}). There were some results on
chaos for coupled-expanding maps (\cite{blockcop}, \cite{shichenccms}, \cite{shichencep},
\cite{shiyuturb}, \cite{shiyusbrep} and \cite{zhangshis}). Later, this notion was further extended to coupled-expanding maps for a transitive matrix $A$(simply called  $A$-coupled-expanding map) in \cite{shijuchen}, which  is the same as the concept of coupled-expanding map if each entry of
the matrix $A$ equals to 1.

Recently, by applying symbolic dynamical system theory, many important results about criteria of chaos using the $A$-coupled-expanding map have been established. For instance, it has been verified that under certain conditions, the strictly $A$-coupled-expanding map is chaotic in the sense of Li-Yorke or Devaney or Wiggins(\cite{kulopro}, \cite{shichenccms}, \cite{shichencep}, \cite{shijuchen}, \cite{shizhangcem}). Also, it was proved that some $A$-coupled-expanding map has positive topological entropy in \cite{rijuentacem}, \cite{zhangshichencem}.

On the other hand, since the concept of distributional chaos have been introduced for the first
time in \cite{schweizsmitdc}, a large number of papers  have been devoted to the study on
distributional chaos, including researches on the relations between the distributional
chaos and many other definitions of chaos such as Li-Yorke chaos, Devaney chaos and so on(see for example \cite{balsmistef}, \cite{fortivnc}, \cite{oprochaddc},
\cite{oprochadcsc}, \cite{oprochadcr}, \cite{smitalvnc}, \cite{wangxiang} and
\cite{wanghuanghuan}). In \cite{kulopro} was shown that if an $A$-coupled-expanding map $f$ satisfies some expanding conditions, then there is a $f$-invariant set $\Lambda$ such that $f|_\Lambda$ is conjugate to a subshift of finite type, consequently, it is distributionally chaotic(See Remark 3.3 of \cite{kulopro}). As above mentioned, recently the concept of $A$-coupled-expanding map which is one of the more natural and useful ideas generalized of the horseshoe map, is well known  as a criterion of chaos, but there is no result yet on the relation between $A$-coupled-expanding and distributional chaos except~\cite{kulopro}. Furthermore, distributional chaos is recognized as one of the concepts reflected strong chaotic behavior. This implies that it is natural to study the relation between $A$-coupled-expanding and distributional chaos or complement conditions for one to be other.

The rest of this paper is organized as follows. In Section~2 some basic concepts and notations which will be used later are introduced. In Section~3 a sufficient condition for a strictly $A$-coupled-expanding map on a compact metric space to be distributionally chaotic in a sequence is established (Theorem~\ref{thm-dcseq}) and give an example which illustrates that our results is remarkable. Finally it is proved that under stronger conditions the map is distributionally chaotic (Theorem~\ref{thm-perdcgen}).

%%% 2. Preliminaries
\section{Preliminaries}
%%%% 2.1 Basic Definitions

Let $\mathbb{N}$ be the set of all  natural numbers and $\mathbb{N}_0=\mathbb{N} \cup\{0\}$. For $m \ge 2$, let $A=(a_{ij})$ be an $m\times m$ matrix. The definitions of transition and irreducible matrix~$A$ follow \cite{shizhangcem}.

Set $\mathcal{A}=\{1,\dots , m\}$ and the element of the $\mathcal{A}$ is called~\emph{alphabet}. It is well known that the set $\Sigma _m=\{\alpha =(a_0 a_1 \ldots)~|~a_i \in \mathcal{A}, i\in\mathbb{N}_0\}$ is a compact metric space with the metric
\begin{displaymath}
 \rho (\alpha, \beta)=\left\{ \begin{array}{ll}
0 & \textrm{if $\alpha =\beta$,}\\
2^{-(k+1)} & \textrm {if $ \alpha \ne \beta$ and $k=\min \{i|a_i\ne b_i\}$,}
\end{array} \right.
\end{displaymath}
where $\alpha =(a_0a_1 \ldots ), \beta=(b_0 b_1 \ldots ) \in \Sigma _m$. For $\alpha =(a_0a_1 \ldots a_i\ldots )$, the subscript $i\in\mathbb{N}_0$ is called the number of $a_i$ in the $\alpha$.
Define the shift map $\sigma:\Sigma_m\rightarrow\Sigma_m$ by $\sigma(\alpha)=(a_1a_2\ldots)$, where $\alpha=(a_0a_1a_2\ldots)\in\Sigma_m$. It is well known that for an $m\times m$ transition matrix $A=(a_{ij})$, $\Sigma_A=\{\beta =(b_0b_1\ldots) \in
\Sigma _m~|~\; a_{b_ib_{i+1}}=1, i\in\mathbb{N}_0 \}$ is a compact subset of $\Sigma _m$, which is invariant under
the shift map $\sigma$. The map $\sigma_A=\sigma|_{\Sigma _A}$ is said to be the subshift of finite type for matrix $A$.

For $(a_0a_1 \ldots) \in \Sigma _A$ and $n\in\mathbb{N}$, the type of $(a_ia_{i+1}\ldots a_{i+n})$$(i\in\mathbb{N}_0)$ is called \emph{admissible word} for matrix $A$ and denote $|(a_ia_{i+1}\ldots a_{i+n})|=n+1$ and call it the \emph{length} of the $(a_ia_{i+1}\ldots a_{i+n})$.
For $s, t\in\mathbb{N}$, let $u=(u_0u_1\ldots u_s)$, $v=(v_0v_1\ldots v_t)$, where $u_i, v_i\in\mathcal{A}$. Denote $uv=(u_0u_1\ldots u_sv_0v_1\ldots v_t)$ and call it the \emph{combination} of $u$ and $v$. Also, for $n\in\mathbb{N}$, denote $u^n=uu\ldots u$($n$ times repeated).
The following definition can be found in \cite{shijuchen}, \cite{oprochaddc}  and  \cite{liuwangchudcdc}.

\begin{definition}(\cite{shijuchen})
Let $(X,d)$ be a metric space and $f:D\subset X \rightarrow X$.
Suppose that $A=(a_{ij})$ is an $m\times m$ transition matrix for some~$m\ge 2$. If there exist $m$ nonempty subsets $\Lambda_i$ ($1\leq i\leq m$) of~$D$ with pairwise disjoint interiors such that
\begin{displaymath}
f(\Lambda_i)\supset\bigcup\limits_{\substack{j\\a_{ij}=1}}\Lambda_j
\end{displaymath}
for all~$1\leq i\leq m$, then $f$ is said to be an \emph{$A$-coupled-expanding map}
in $\Lambda_i$ ($1\leq i\leq m$). Moreover, the map~$f$ is said to
be a \emph{strictly $A$-coupled-expanding map} in $\Lambda_i$ ($1\leq i\leq m$) if
$d(\Lambda_i, \Lambda_j)>0$ for all~$1\leq i\neq j\leq m$.
\end{definition}

\begin{definition}(\cite{oprochaddc})
Let $(X,d)$ be a compact metric space and $f:X \rightarrow X$ be a continuous map. For $x,y\in X$ set
\begin{displaymath}
F_{xy}^{(n)}(t)=\frac {1}{n}  \# \{ i:d(f^i (x), f^i (y)) < t, 0\le i <n \}\,,
\end{displaymath}
where $\#A$ denotes the cardinal number of set the~$A$. Furthermore set
\begin{displaymath}
F_{xy}(t)=\liminf\limits_{n\rightarrow\infty} F_{xy}^{(n)}(t)\quad\text{and}\quad
F_{xy}^*(t)=\limsup\limits_{n\rightarrow\infty} F_{xy}^{(n)}(t)\,.
\end{displaymath}
\end{definition}

The functions~$F_{xy}(t)$ and $F_{xy}^*(t)$ are called \emph{lower} or \emph{upper distribution functions} of $x$ and $y$ respectively. We consider the following three conditions:
\begin{displaymath}
 \begin{array}{ll}
\text{D(1)}  \quad F_{xy}^*\equiv 1 \quad\text{and}\quad\exists s>0,  F_{xy}(s)=0,\\
\text{D(2)}  \quad F_{xy}^*\equiv 1 \quad\text{and}\quad F_{xy}<F_{xy}^*,\\
\text{D(3)}  \quad F_{xy}<F_{xy}^*.
\end{array}
\end{displaymath}
If a pair of  points~$(x, y)$ satisfies the condition D($k$) ($k=1,2,3$), then the pair~$(x,y)$ is called \emph{distributionally chaotic pair of type~$k$}. A subset of $X$ containing at least two points is called \emph{distributional scrambled set of type~$k$} if any two points of the subset form distributionally chaotic pair of type~$k$. Finally $f$ is said to be \emph{distributionally chaotic of type~$k$} if $f$ has an uncountable distributional scrambled set of type~$k$.

\begin{definition}(\cite{liuwangchudcdc})
Let $(X, d)$ be a compact metric space and $f: X \rightarrow X$ be a continuous map. Suppose that $(p_i)$ is an increasing sequence of positive integers, $x, y \in X$ and $t>0$. Set
\begin{displaymath}
\begin{array}{ll}
F_{xy}(t, (p_i))=\liminf \limits_{n \rightarrow \infty }
\frac {1}{n} \# \{i:d(f^{p_k}(x), f^{p_k}(y))<t, 1\le k \le n \},\\
F_{xy}^*(t, (p_i))=\limsup \limits_{n \rightarrow \infty }
\frac {1}{n} \# \{i:d(f^{p_k}(x), f^{p_k}(y))<t, 1\le k \le n \}.
\end{array}
\end{displaymath}
\end{definition}

A subset~$D \subset X$ is called \emph{distributionally chaotic set in a sequence} (or \emph{in the sequence $(p_i)$}) if for any $x,y\in D$ with $x\ne y$
\begin{enumerate}[(1)]
\item there is an $\varepsilon >0$ with $F_{xy}(\varepsilon, (p_k))=0$, and
\item $F_{xy}^*(t, (p_k))=1$ for every $t>0$.
\end{enumerate}
If $f$ has an uncountable distributionally chaotic set in a sequence, the map~$f$ is said to be \emph{distributionally chaotic in a sequence}. Obviously distributional chaos of type 1 implies distributional chaos in a sequence.

%%%%%%%%%%%%%%%%%%3. Main results
\section{Main results}

Assume that $(X, d)$ is a compact metric space, $m \ge 2$ and for $1\le i \le m$ the
sets~$V_i$ are compact subsets of~$X$ with pairwise disjoint interiors. Let
$f:\bigcup \limits _{i=1}^m V_i\rightarrow X$ be a continuous map and
$A=(a_{ij})$ be an irreducible $m \times m$ transition matrix satisfying
that% following assumption
\begin{equation*}
\text{there exists an~$i_0$ with }\sum_{j=1}^m a_{i_0j}\ge 2\,.
\tag{\lower 0.6ex\hbox{*}}
\end{equation*}

Suppose that $f$ is a strictly $A$-coupled-expanding map in the $V_i$ ($1\le i \le m$). For any
$\beta =(b_0b_1 \ldots) \in \Sigma _A$, set $V_\beta =\bigcap \limits_{n=0}^\infty
f^{-n}(V_{b_n})$,where $f^0$ is the identity map, and for any admissible word for the matrix $A$, $c=(c_0c_1 \ldots c_n)$, put
$V_c=V_{c_0c_1 \ldots c_n}=\bigcap \limits_{i=0}^n f^{-i}(V_{c_i})$. Then $V_c$ is a nonempty compact subset. Moreover, $V_{c_0c_1 \ldots c_{n-1}}\supset
V_{c_0c_1 \ldots c_{n-1}c_n}$ and $f(V_{c_0c_1\ldots c_n})=V_{c_1 \cdots c_n}$. Therefore $V_\beta=\bigcap \limits_{n=0}^\infty
V_{b_0b_1 \ldots b_n}$ and hence $V_\beta$ is nonempty and compact. If
$(c_0c_1 \ldots c_n), (d_0d_1 \ldots d_n)$ are two different admissible words for the matrix $A$, then
$V_{c_0c_1 \ldots c_n}\cap V_{d_0d_1\ldots d_n}=\emptyset$ (see \cite{zhangshichencem}).

%%%%%%%%%%%%%%%Theorem 3.1

\begin{theorem}\label{thm-dcseq}
Let $(X, d)$ be a compact metric space, $m \ge 2$. Suppose that $f:X\rightarrow X$ is a continuous map and $A$ is an irreducible $m \times m$ transition matrix satisfying the assumption~\textup{(\lower 0.6ex\hbox{*})}. Let $f$ be strictly $A$-coupled-expanding in compact sets $V_i(1\le i \le m)$. If there exists
an~$\alpha =(a_0a_1 \ldots) \in \Sigma_A$ such that
$V_{\alpha}=\bigcap _{n=0}^\infty f^{-n}(V_{a_n})$ is a singleton,
then there exists a sequence~$(p_k)$ such that $f$ is distributionally chaotic in the sequence~$(p_k)$.
\end{theorem}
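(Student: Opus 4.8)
The plan is to use the singleton hypothesis to manufacture a point whose orbit, at a carefully chosen sequence of times, is "pinched" close to a reference orbit, while at another carefully chosen sequence of times it is forced far away. The natural source of points is the coding: by the preliminary discussion, every $\beta=(b_0b_1\ldots)\in\Sigma_A$ gives a nonempty compact $V_\beta=\bigcap_n V_{b_0\ldots b_n}$, and we may pick $x_\beta\in V_\beta$. Since $A$ is irreducible, for any two admissible words $u,v$ there is an admissible word $w$ with $uwv$ admissible; this lets us freely concatenate blocks. The assumption~(\lower 0.6ex\hbox{*}) guarantees at least two distinct one-step continuations from $i_0$, hence at least two distinct admissible words of any given length (after routing through $i_0$), which is what produces uncountably many codes and gives the branching needed to separate orbits.

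**First I would** fix the reference sequence $\alpha=(a_0a_1\ldots)$ with $V_\alpha=\{z\}$ a singleton. The key analytic input is: because $V_\alpha$ is a singleton and the $V_{a_0\ldots a_n}$ are a nested sequence of compact sets shrinking to it, $\operatorname{diam}(V_{a_0\ldots a_n})\to 0$. Concretely, if $\gamma$ agrees with $\alpha$ on a long initial block $a_0\ldots a_{N}$, then $x_\gamma\in V_{a_0\ldots a_N}$, so $d(x_\gamma,z)$ is small, and moreover $d(f^j(x_\gamma),f^j(z))$ is small for $0\le j\le N$ since $f^j(x_\gamma)\in V_{a_j\ldots a_N}$ and $f^j(z)\in V_{a_j a_{j+1}\ldots}\subset V_{a_j\ldots a_N}$; the diameters of these sets along the tail also tend to $0$. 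For the "far apart" times I would use strict coupled-expansion: $\delta:=\min_{i\ne j}d(V_i,V_j)>0$, so whenever two codes $\gamma,\gamma'$ differ in the $j$-th symbol, $d(f^j(x_\gamma),f^j(x_{\gamma'}))\ge\delta$.

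**Next** I would build the scrambled family. Using irreducibility I can, for each parameter in $\{0,1\}^{\mathbb N}$ (or a Cantor set of parameters), construct a code $\gamma$ built from alternating "long copy-$\alpha$" blocks and "branching" blocks, with the block lengths growing fast enough. Interleave: on block-indices of one type make the code follow $\alpha$ for a very long stretch — this forces, for the associated times $(p_k)$, that $d(f^{p_k}(x_\gamma),f^{p_k}(x_{\gamma'}))$ is as small as we like for \emph{any} two members $\gamma,\gamma'$ of the family, giving $F_{x_\gamma x_{\gamma'}}^*(t,(p_k))=1$ for every $t>0$ once the block lengths dominate; on block-indices of the other type, route through $i_0$ and use the two distinct continuations indexed by the differing parameters so that the two codes disagree at a positive-density set of times, each such disagreement contributing a gap $\ge\delta$, giving $F_{x_\gamma x_{\gamma'}}(\delta,(p_k))=0$ after passing to the appropriate sub-density along $(p_k)$. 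The sequence $(p_k)$ must be chosen \emph{uniformly} for the whole family; the standard device is a Cantor-scheme construction where the $n$-th level of block lengths is fixed in advance (depending only on $n$, not on the parameter), so the positions of "copy-$\alpha$" windows and "branching" windows are the same for all $\gamma$, and $(p_k)$ is assembled from these common windows. Distinctness of the uncountably many $x_\gamma$ follows from strict expansion: different parameters force a symbol disagreement, hence $d\ge\delta$ at some time, hence $x_\gamma\ne x_{\gamma'}$.

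**The main obstacle** is the simultaneous control: one must choose a \emph{single} increasing sequence $(p_k)$ and a \emph{single} uncountable family so that along $(p_k)$ the upper density of "close" times is $1$ for every pair while the lower density of "close" times is $0$ for every pair — these pull in opposite directions and only coexist because $F^*$ and $F$ are $\limsup$/$\liminf$ and we get to place arbitrarily long windows of each kind. Making the block-length schedule explicit (e.g.\ $n$-th copy-$\alpha$ window long enough that the first $n$ branching windows become negligible in density, and $n$-th branching window long enough that $\alpha$-windows up to stage $n$ become negligible) and checking that the resulting $(p_k)$-densities behave as claimed for \emph{all} parameters at once is the technical heart; the irreducibility-plus-(\lower 0.6ex\hbox{*}) bookkeeping (inserting connecting words, controlling their bounded length) is routine but must be done carefully so that the "positive density of disagreements" claim survives.
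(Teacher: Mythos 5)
Your overall strategy is exactly the paper's: code points by elements of $\Sigma_A$ built from alternating ``copy-$\alpha$'' blocks and two branching blocks of equal length (obtained from irreducibility plus~(\lower 0.6ex\hbox{*})), with a block-length schedule fixed in advance and growing so fast that each new block swamps everything before it; take $(p_k)$ to be the common window positions, get separation from strictness of the coupled-expansion, closeness from the singleton hypothesis, and uncountability from a Cantor set (the paper uses a Li-Yorke scrambled set of $(\Sigma_2,\sigma)$) of parameters.

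There is, however, one step in your sketch that is false as stated and is precisely the point the paper's construction is engineered to avoid. You claim that if $\gamma$ agrees with $\alpha$ on positions $0,\dots,N$ then $d(f^j(x_\gamma),f^j(z))$ is small for \emph{all} $0\le j\le N$ because both orbits lie in $V_{a_j\ldots a_N}$. But $\operatorname{diam}(V_{a_j\ldots a_N})$ is only controlled by the length $N-j$ of the remaining constraint, and the rate at which $\operatorname{diam}(V_{a_j\ldots a_{j+n}})\to 0$ as $n\to\infty$ is \emph{not uniform in $j$} (the hypothesis only gives that each $V_{\sigma^j\alpha}$ is a singleton; uniformity is essentially what the periodicity assumption of the paper's Theorem~\ref{thm-perdcgen} buys). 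For $j$ near $N$ the set $V_{a_j\ldots a_N}$ can be as large as $V_{a_N}$ itself. Consequently a single long ``follow $\alpha$'' window does not yield many guaranteed-close times, and your density bookkeeping (``the $n$-th copy-$\alpha$ window long enough that \dots'') does not go through in the form written. The paper's fix is to use blocks of the form $u_i^{s}$, where $u_i=(a_0\ldots a_{\nu_i-1})$ is a \emph{prefix of $\alpha$ repeated $s$ times} (possible because some symbol $a_0$ recurs in $\alpha$, making $u_i^s$ admissible), and to let $(p_k)$ enumerate only the start positions of each repetition: at every such time the orbit lies in $V_{u_i}$ with $u_i$ a long \emph{initial} word of $\alpha$, so the single estimate $\operatorname{diam}(V_{a_0\ldots a_n})<\varepsilon$ for $n\ge n_0$ applies uniformly, and the count of close times in the block is $s$, not $1$. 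You would need to build this repetition (or an equivalent device) into your copy-$\alpha$ windows before the upper-density computation is valid.
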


\begin{proof}
From the assumptions and the facts presented earlier in this section, it follows that for any $n\in \mathbb{N}_0$,
\begin{equation}
f^n(V_\alpha)=V_{\sigma^n(\alpha)}
\end{equation}
and $f^n(V_\alpha)$ also is singleton. Obviously there is at least one alphabet appearing infinitely in $\alpha =(a_0a_1 \ldots)$ and without loss of generality we may assume that $a_0$ appears infinitely in~$\alpha$. This means that there exists a strictly increasing sequence~$(\nu_k)$ in~$\mathbb{N}$ with $a_{\nu_k}=a_0$ for all~$k\in {\mathbb N}$. Now for $k\in {\mathbb N}$, set $u_k=(a_0a_1 \ldots a_{\nu_k-1})$. Obviously, for $n\in\mathbb{N}$, $u_k^n$ is an admissible word for the matrix $A$.

Since the definition of irreducible matrix and the assumption~($*$), Lemma~2.4 of~\cite{shizhangcem}, there is an~$a' \in \cal A$ such that $a_{a'a_0}=1$, moreover, there are two different admissible words for the matrix $A$, $v_1=(a_0 \ldots a')$ and $v_2=(a_0 \ldots a')$, such that $|v_1|=|v_2|$. Obviously any combination of $u_k$, $v_1$, $v_2$ is admissible word for matrix $A$.

Define a map $\varphi:\Sigma_2\rightarrow \Sigma_A$ by
\begin{equation}\label{frm-defphi}
\varphi(c)=v_{c_0}^{s_1}u_1^{s_2}v_{c_1}^{s_3}v_{c_2}^{s_4}u_1^{s_5}u_2^{s_6}
v_{c_3}^{s_7}v_{c_4}^{s_8}v_{c_5}^{s_9}u_1^{s_{10}}u_2^{s_{11}}u_3^{s_{12}}
v_{c_6}^{s_{13}}\ldots
\end{equation}
for $c=(c_0c_1c_2c_3c_4 \ldots) \in \Sigma_2$,
where the $s_i$ are defined as following;
\begin{displaymath}
 \begin{array}{ll}
\ s_1=1, \\
\ s_2=2^1|v_{c_0}^{s_1}|,\\
\ s_3=2^2|v_{c_0}^{s_1}u_1^{s_2}|,\\
\ s_4=2^3|v_{c_0}^{s_1}u_1^{s_2}v_{c_1}^{s_3}|,\\
\ s_5=2^4|v_{c_0}^{s_1}u_1^{s_2}v_{c_1}^{s_3}v_{c_2}^{s_4}|,\\
\ s_6=2^5|v_{c_0}^{s_1}u_1^{s_2}v_{c_1}^{s_3}v_{c_2}^{s_4}u_1^{s_5}|,\\
\ldots    \ldots    \ldots \\
\end{array}
\end{displaymath}

This map~$\varphi$ is well defined and obviously bijective by the definition. By the construction of the map $\varphi$, any two elements in the $\varphi(\Sigma_2)$ coincide in the parts appearing combinations of $u_i$. Also, for any two elements of $\varphi(\Sigma_2)$, the first alphabets $a_0$ of admissible words $u_i$ or $v_j$ appear in the same places. Now, for any $\alpha\in \varphi(\Sigma_2)$, rearranging the numbers of the first alphabets $a_0$ of $u_i$ or $v_j$ in the $\alpha$, we make a strictly increasing sequence $(p_k)_{k=1}^\infty$ in $\mathbb{N}_0$. As $(\Sigma_2, \sigma)$ is chaotic in
the sense of Li-Yorke, it has an uncountable scrambled set which we denote by $S$. Put $D_0=\varphi (S)$. Then $D_0$ is also uncountable.
Since the facts presented earlier in this section, it follows that for any $\hat{\alpha}=(\hat{a}_0\hat{a}_1\ldots)\in D_0$, $V_{\hat{\alpha}}$ is nonempty and if $\hat{\alpha}\ne \hat {\beta}\in D_0$, then $V_{\hat{\alpha}}\cap V_{\hat{\beta}}=\emptyset$. Now, for each $\hat\alpha\in D_0$, we choose only one element from $V_{\hat\alpha}$ and fix it by denoting $x_{\hat\alpha}$. Set $G=\{x_{\hat\alpha}~|~\hat\alpha\in D_0\}$. 

We prove that this set $G$ is the uncountable distributional scrambled set in the sequence $(p_k)_{k=1}^\infty$. It is obvious that the $G$ is uncountable.
Using condition of this theorem  and the facts presented earlier in this section, one obtains that
$(\operatorname{diam}(V_{a_0a_1 \cdots a_n}))_{n=0}^\infty$ is a non increasing sequence and
$\lim_{n\to\infty}\operatorname{diam}(V_{a_0a_1 \cdots a_n})=0$, where
$\operatorname{diam}(V)$ denotes diameter of $V$. Therefore for every~$\varepsilon>0$ there
is an~$n_0 \in \mathbb{N}$ such that
\begin{equation}\label{frm-diamzero}
\operatorname{diam}(V_{a_0a_1 \cdots a_n}) <\varepsilon
\end{equation}
for all~$n \in \mathbb{N}$ with $n\ge n_0$. For any $\hat {\alpha}=(\hat{a}_0\hat{a}_1\ldots) \in D_0$ we can choose
a subsequence~$(p_{k_j})$ of $(p_k)$ such that $(p_{k_j})$-th alphabet is the first alphabet(that is, $a_0$) of a combination $u_i^{s_k}$. By (\ref{frm-defphi}), there is a $p_{k_j}$ such that  $(\hat{a}_{p_{k_j}}\hat{a}_{p_{k_j}+1}\hat{a}_{p_{k_j}+2}\ldots
\hat{a}_{p_{k_j+1}-1} )=(a_0a_1 \ldots a_{p_{k_j+1}-p_{k_j}-1}) =u_{p_{k_j+1}-p_{k_j}}$ and
\begin{equation}\label{frm-langthlar}
|u_{p_{k_j+1}-p_{k_j}}|=p_{k_j+1}-p_{k_j} >n_0.
\end{equation}
 From~(\ref{frm-defphi}) there is a subsequence $(s_{k'_j})$ of $(s_i)$ such that
$\hat{\alpha}$ contains combination of admissible words, $u_{p_{k_j+1}-p_{k_j}}^{s_{k'_j}}$.
Fix $j \in \mathbb{N}$ arbitrarily. Then, by (\ref{frm-diamzero}) and (\ref{frm-langthlar}), for any $i$ with $k_j \le i\le k_j+s_{k'_j}-1 $,
 it follows that $f^{p_i}(x_{\hat{\alpha}}),
f^{p_i}(x_{\hat{\beta}})\in V_{u_{p_{k_j+1}-p_{k_j}}}$ for any $x_{\hat{\alpha}}$, $x_{\hat{\beta}}\in G$. Since
$\operatorname{diam}(V_{u_{p_{k_j+1}-p_{k_j}}})<\varepsilon$, we have
$d(f^{p_i}(x_{\hat{\alpha}}), f^{p_i}(x_{\hat{\beta}}))<\varepsilon$. Thus, from the definition of $s_i$, we obtain

\begin{eqnarray*}
\lefteqn{\frac{\#\{i:d(f^{p_i}(x_{\hat{\alpha}}),
f^{p_i}(x_{\hat{\beta} }) )<\varepsilon, 1 \le i\le k_j+s_{k'_j}-1\} }{k_j+s_{k'_j}-1}} \\
  & &{} \ge\frac{s_{k'_j}}{k_j+s_{k'_j}-1} \\
  & &{} \ge\frac{s_{k'_j}}{p_{k_j}+s_{k'_j}-1}\\
  & &{} = \frac{s_{k'_j}}{2^{-k'_j+1}s_{k'_j}+s_{k'_j}-1}
\rightarrow 1(j \rightarrow \infty).
\end{eqnarray*}
By above expression and the definition of superior limit of sequence, we have
\begin{equation}\label{frm-udftone}
\limsup_{n\to\infty}\frac{\#\{i:d(f^{p_i}(x_{\hat{\alpha} }),
f^{p_i}(x_{\hat{\beta} }) )<\varepsilon, 1\le i\le n\} }{n}=1.
\end{equation}

Next set $d_0=d(V_{v_1},V_{v_2})>0$. Since any two different elements of the set $S\subset \Sigma _2$ is Li-Yorke pair, the set $\{i:c_i \ne d_i\}$ is infinite for $c=(c_0c_1 \ldots)\ne d=(d_0d_1 \ldots) \in S$. Therefore, without loss of generality, we may assume that for $\hat\alpha\ne\hat\beta\in D_0$, there exist a subsequence $(p_{q_j})_{j=1}^{\infty}$ of the sequence ($p_k$) and a subsequence $(s_{q'_j})_{j=1}^{\infty}$ of the sequence~$(s_i)$ such that ,
\begin{displaymath}
\sigma ^{p_{q_j}}(\hat{\alpha})=(v_1^{s_{q'_j }} \ldots)\text{ and }
\sigma ^{p_{q_j}}(\hat{\beta})=(v_2^{s_{q'_j }} \ldots)
\end{displaymath}
hold. Fix $j\in\mathbb{N}$ arbitrarily. For $i$ with $q_j \le i \le q_j+s_{q'_j}-1$, it follows that
$f^{p_i}(x_{\hat {\alpha}}) \in V_{v_1}$ and $f^{p_i}(x_{\hat {\beta}}) \in V_{v_2}$.
This means that $d(f^{p_i}(x_{\hat {\alpha}}), f^{p_i}(x_{\hat {\beta}})  \ge d_0$. Thus
\begin{eqnarray*}
\lefteqn{\frac{\#\{i:d(f^{p_i}(x_{\hat{\alpha} }),f^{p_i}(x_{\hat{\beta} }) )<d_0, 1\le i\le
q_j+s_{q'_j}-1\} }{q_j+s_{q'_j}-1}{}} \\
&&{}\le \frac {q_j-1}{q_j+s_{q'_j}-1}\\
&&{}\le\frac{2^{-q'_j+1}s_{q'_j}-1}{2^{-q'_j+1}s_{q'_j}+s_{q'_j}-1}
\rightarrow 0(j \rightarrow \infty),
\end{eqnarray*}
hence
\begin{equation}\label{frm-ldfzer}
\liminf_{n\to\infty}\frac{\#\{i:d(f^{p_i}(x_{\hat{\alpha} }),
f^{p_i}(x_{\hat{\beta} }) )<d_0, 1\le i\le n\} }{n}=0.
\end{equation}
From (\ref{frm-udftone}) and (\ref{frm-ldfzer}), the set $G$ is the uncountable distributional scrambled set in the sequence $(p_k)$, therefore, we can conclude that the map~$f$ is distributionally chaotic in the sequence~$(p_k)$.
\end{proof}

\begin{example}\label{exm-fgdcs}
Define $f : [0, 3]\rightarrow [0, 3]$ as following;
\begin{displaymath}
f(x)= \left\{ \begin{array}{ll}
1.5x+2,& \textrm{if $x\in [0, \frac{1}{3}]$}\\
2.5, & \textrm{if $x\in (\frac{1}{3}, \frac{2}{3}]$}\\
1.5x+1.5, & \textrm{if $x\in (\frac{2}{3}, 1]$}\\
3, &\textrm{if $x\in (1, 2]$}\\
-3x+9, &\textrm{if $x\in (2, 3]$}.
\end{array} \right.
\end{displaymath}
Set
\begin{displaymath}
A =
\left( \begin{array}{ccc}
0 & 1 \\
1 & 1 \\
\end{array} \right)
\end{displaymath}
and $V_1=[0,1]$, $V_2=[2,3]$.
This matrix $A$ is an irreducible transition matrix satisfying the assumption~\textup{(\lower 0.6ex\hbox{*})} and $f$ is a strictly $A$-coupled-expanding in $V_1, V_2$. Also this map $f$ satisfies the remaining conditions of the Theorem~\ref{thm-dcseq}. Therefore, the map $f$ has an uncountable distributional scrambled set in a sequence. However, $f$ does not satisfy the epsilon-delta condition of the Theorem 1 from~\cite{kulopro}, that is, set $B=\{2.5\}$, then $\operatorname{diam}(B)$=0 and $\operatorname{diam}(f^{-1}(B)\cap{V_1})=\frac{1}{3}$.
\end{example}

%%%%%%%%%%%%%%%Theorem 3.2

\begin{theorem}\label{thm-perdcgen}
Let $(X, d)$ be a compact metric space, $m \ge 2$. Suppose that $f:X\rightarrow X$ is a continuous map and $A$ is an irreducible $m \times m$ transition matrix satisfying the assumption~\textup{(\lower 0.6ex\hbox{*})}. Let $f$ be strictly $A$-coupled-expanding in the compact sets $V_i(1\le i \le m)$. If there exists a periodic point~$\alpha =(a_0a_1 \ldots) \in \Sigma_A$ such that
$V_{\alpha}=\bigcap _{n=0}^\infty f^{-n}(V_{a_n})$ is a singleton,
then $f$ is distributionally chaotic of type 1.
\end{theorem}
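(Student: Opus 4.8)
The plan is to follow the same construction as in Theorem~\ref{thm-dcseq}, but exploit periodicity of $\alpha$ to upgrade the conclusion from ``distributionally chaotic in a sequence $(p_k)$'' to ``distributionally chaotic of type~$1$'', i.e.\ with respect to the full sequence of iterates. Write $\alpha=(a_0a_1\ldots)$ with period $\ell$, so $a_{n+\ell}=a_n$ for all $n$, and set $u=(a_0a_1\ldots a_{\ell-1})$; then $u^n$ is admissible for every $n$ and $\alpha=u^\infty$. As before, by Lemma~2.4 of~\cite{shizhangcem} (using irreducibility and~($*$)) pick $a'\in\mathcal{A}$ with $a_{a'a_0}=1$ and two distinct admissible words $v_1=(a_0\ldots a')$, $v_2=(a_0\ldots a')$ of equal length; since $a_{a'a_0}=1$, any concatenation of blocks from $\{u,v_1,v_2\}$ is admissible.

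First I would define an injection $\varphi:\Sigma_2\to\Sigma_A$ analogous to~(\ref{frm-defphi}), interleaving very long runs $u^{s}$ (``synchronizing'' blocks, where the two images will be $\varepsilon$-close for \emph{every} iterate in the block, not merely along a subsequence) with the selector blocks $v_{c_i}^{s'}$ (where the two images lie in $V_{v_1}$ versus $V_{v_2}$, hence at distance $\ge d_0:=d(V_{v_1},V_{v_2})>0$ for \emph{every} iterate in the block). The key gain over Theorem~\ref{thm-dcseq} is that because $V_\alpha$ is a singleton and $\operatorname{diam}(V_{a_0\ldots a_n})\to 0$, once a $u^{s}$-block is long enough, \emph{all} the shifted sequences $\sigma^{p}(\hat\alpha)$ for $p$ inside the first (say) half of that block begin with a long common prefix of $\alpha$, so $f^{p}(x_{\hat\alpha})$ and $f^{p}(x_{\hat\beta})$ lie in a common $V_{a_0\ldots a_q}$ of diameter $<\varepsilon$ --- this works for every such index $p$, not just for the special indices $p_k$ marking block starts. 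Choosing the growth rates $s_i$ so that each new block dwarfs everything before it (the $2^i$-type weighting already does this), the density of good indices in $\{0,\ldots,n-1\}$ for suitable $n$ tends to $1$, giving $F^*_{x_{\hat\alpha}x_{\hat\beta}}\equiv 1$; and the density of indices with $d(f^i(x_{\hat\alpha}),f^i(x_{\hat\beta}))<d_0$, evaluated at the $n$ ending a $v$-block, tends to $0$, giving $F_{x_{\hat\alpha}x_{\hat\beta}}(d_0)=0$. Together these are exactly D(1), so the set $G=\{x_{\hat\alpha}:\hat\alpha\in\varphi(S)\}$ (with $S\subset\Sigma_2$ an uncountable Li--Yorke scrambled set) is an uncountable distributional scrambled set of type~$1$.

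The main obstacle --- and the place where periodicity is essential --- is controlling $d(f^i(x_{\hat\alpha}),f^i(x_{\hat\beta}))$ for \emph{all} indices $i$ inside a synchronizing block, not merely at block boundaries. In Theorem~\ref{thm-dcseq} one only claims closeness at the indices $p_i$ that are starting positions of $u$-copies; here I must show that for $p$ ranging over a positive-density (in fact asymptotically full-density) subset of a long $u^{s}$-block, $\sigma^{p}(\hat\alpha)$ and $\sigma^{p}(\hat\beta)$ share a long prefix equal to an initial segment of $\alpha$. This is where $\alpha=u^\infty$ being periodic pays off: shifting by any multiple of $\ell$ within the block (and, with a little care, by any amount, up to truncating the tail of the block) reproduces an initial segment of $\alpha$, and the singleton/shrinking-diameter property then forces the images into a tiny $V_{a_0\ldots a_q}$. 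One must also verify that the leftover indices near the end of each block (the last $O(\ell)$ positions, plus the relatively negligible $v$-blocks seen so far) contribute vanishing density --- routine, given the super-exponential growth of the $s_i$. I would also remark, as the authors do after Definition~2.3, that D(1) implies distributional chaos in a sequence, so Theorem~\ref{thm-perdcgen} strictly strengthens the periodic-point case of Theorem~\ref{thm-dcseq}.
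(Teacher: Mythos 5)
Your overall strategy is the same as the paper's: interleave ever\hbox{-}longer ``synchronizing'' blocks built from the periodic word (the paper uses prefixes $B_{p_i}\bar B_{p_i}$ of $\alpha$ glued in by a connector word $C$, you use powers $u^{s}$ of the period block --- an inessential difference) with ever\hbox{-}longer ``selector'' blocks $v_1^{s}$ versus $v_2^{s}$, push an uncountable scrambled set of $(\Sigma_2,\sigma)$ through the resulting injection, and pick one point from each cylinder $V_{\hat\alpha}$. The synchronizing half of your argument is essentially right, modulo a phrasing issue you yourself flag: for $p$ at offset $r\not\equiv 0\pmod{\ell}$ inside a $u^{s}$-block, $\sigma^{p}(\hat\alpha)$ begins with a long prefix of $\sigma^{r}(\alpha)$, not of $\alpha$; since $f^{r}(V_\alpha)=V_{\sigma^{r}(\alpha)}$ is again a singleton, one controls $\max_{0\le r<\ell}\operatorname{diam}(V_{a_r\ldots a_{r+k}})\to 0$ (this is the paper's quantity $d_k$), and the density-one estimate goes through.

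The genuine gap is in the selector blocks. You assert that for \emph{every} iterate $p$ inside a block where $\hat\alpha$ carries $v_1^{s}$ and $\hat\beta$ carries $v_2^{s}$, the images lie in $V_{v_1}$ versus $V_{v_2}$ and hence are at distance at least $d_0=d(V_{v_1},V_{v_2})$. That is false except when $p$ is aligned with the start of a $v$-copy: for offset $r\ne 0$ the itinerary of $f^{p}(x_{\hat\alpha})$ over the next $l$ steps (with $l$ the common length of $v_1,v_2$) is the $r$-th cyclic shift $w_1$ of $v_1$, a \emph{different} admissible word of length $l$, so $f^{p}(x_{\hat\alpha})\in V_{w_1}$ and $f^{p}(x_{\hat\alpha})\notin V_{v_1}$, because distinct admissible words of equal length have disjoint cylinders. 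If you count only the aligned iterates, the density of far indices in the block is about $1/l$, which does not yield $F_{x_{\hat\alpha}x_{\hat\beta}}(d_0)=0$. The repair is exactly what the paper does: let $W$ be the set of all admissible words of length $l$ and choose $0<d_0\le\min\{d(V_c,V_d)~|~c\ne d\in W\}$; since $v_1\ne v_2$ differ in some coordinate, their cyclic shifts by the same amount still differ, so every length-$l$ window that stays inside the block produces two distinct words of $W$ and hence distance at least $d_0$, and only the last $l-1$ positions of the block are lost. With that correction your argument coincides with the paper's proof.
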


\begin{proof}
Denote the period of $\alpha$ by $T$, that is, $\sigma_A^T(\alpha)=\alpha$. Then
$\#\{ \sigma_A^n(\alpha)~|~n\in \mathbb{N}_0\}=T$. From the assumption of the theorem,
$f^n(V_{\alpha})=V_{\sigma _A^n(\alpha)}=
\bigcap \limits_{i=0}^\infty V_{a_na_{n+1}\ldots a_{n+i}}$ is a singleton for
any~$n \in \mathbb{N}_0$. Furthermore, for any~$n \in \{0,1,\ldots, T-1\}$ the sequence
$(\operatorname{diam}(V_{a_n \ldots a_{n+i}}))_{i=0}^\infty$ is nonincreasing and
satisfies $\lim_{i\to\infty}\operatorname{diam}(V_{a_n \ldots a_{n+i}})=0$. Set
$d_k=\max \{\operatorname{diam}(V_{a_n \ldots a_{n+k}})~|~0\le n \le T-1\}$
for~$k\in \mathbb{N}$. Then
\begin{equation}\label{frm-diamdkzer}
\lim_{k\to\infty}d_k=0.
\end{equation}
Since the definition of irreducible matrix and the assumption ($*$), there exists an~$a' \in \mathcal{A}$ such that $(A)_{a'a_0}=1$. There are two different admissible words $u=(a_0 \ldots a'), v=(a_0 \ldots a')$ for the matrix $A$ such that $|u|=|v|=l$. Also, since the matrix~$A$ is irreducible, there exist $i,j \in \mathcal{A}$ such that $(A)_{a_0i}=(A)_{ja_0}=1$ and there is an admissible word~$C=(i \ldots j)$ for the matrix~$A$.

For $p\in\mathbb{N}$ set $B_p=(a_0a_1\ldots a_{p-1})$. Define $\bar{B}_p $ as following; If $a_{p-1}=a_0$ we assume that $\bar{B}_p$ vanishes. Otherwise, there is a $q\in\{1,2,\dots ,T-1\}$ such that $a_{p-1}=a_q$. In this case set $\bar{B}_p=(a_{q+1}a_{q+2}\ldots a_T)$.
Construct a map  $\varphi:\Sigma_2\rightarrow\Sigma_A$ as following; Define a map~$\psi: \lbrace{1,2}\rbrace\rightarrow\lbrace{u,v}\rbrace$ as $\psi(1)=u$, $\psi(2)=v$. Now define the map $\varphi$ by
\begin{eqnarray}\label{frm-varphi3.2def}
\lefteqn{\varphi(a_0a_1a_2\ldots){}}\nonumber\\
&&{}=a_0C\psi(a_0)^{m_1}B_{p_2}\bar{B}_{p_2}C\psi(a_1)^{m_3}\psi(a_2)^{m_4}B_{p_5}\bar{B}_{p_5}C\psi(a_3)^{m_6}\psi(a_4)^{m_7}\\
&&{}\qquad\psi(a_5)^{m_8}B_{p_9}\bar{B}_{p_9}C\psi(a_6)^{m_{10}}\psi(a_7)^{m_{11}}
\psi(a_8)^{m_{12}}\psi(a_9)^{m_{13}}\ldots\nonumber
\end{eqnarray}
where $m_i$ is $2^i$th power of the length of the subword to the left of~$\psi(a_j)^{m_i}$ in~(\ref{frm-varphi3.2def}) and $p_i$ is $2^i$th power of the length of the subword to the left
of~$B_{p_i}$ in~(\ref{frm-varphi3.2def}). Obviously any subwords in the sequence defined in~(\ref{frm-varphi3.2def}) are the admissible words for the matrix $A$, that is, the map $\varphi$ is well defined. By the construction of (\ref{frm-varphi3.2def}), the map $\varphi$ is bijective. Since the shift $(\Sigma_2, \sigma)$ is chaotic in sense of Li-Yorke, $\sigma$ has an uncountable scrambled set in $\Sigma_2$. Denote the image of this scrambled set by $\varphi$  as $D_0$. For any $\hat{\alpha} \in D_0$, we choose an element from $V_{\hat {\alpha}}$ and fix it, denoting as $x_{\hat{\alpha}}$. Denote the set consisting of these elements as $D_x$($\subset\bigcup_{i=1}^m{V_i}$).

Now we prove that this  $D_x$ is uncountable distributional scrambled
set of type~1. It is obvious that $D_x$ is uncountable. Fix $x_{\hat \alpha}, x_{\hat \beta}\in D_x$ arbitrarily. (\ref{frm-diamdkzer}) implies that for every~$t>0$ there is a~$k \in \mathbb{N}$ with $d_k<t$. Let $\hat\alpha\ne\hat\beta\in D_0$. Obviously, for any $i$, the end alpabet of the combination $B_{p_i}\bar{B}_{p_i}$ is $a_0$. Denote the number of the $a_0$ in $\hat\alpha$ as $n_i$. By definition of $\varphi$, $n_i$ also is the number of the end $a_0$ of $B_{p_i}\bar{B}_{p_i}$ in $\hat\beta$. Consider $|\bar{B}_{p_i}|\leq T$, then we can see easily that $n_i \le p_i(1+2^{-i})+T$. The choice
of~$p_i$ gives that $|B_{p_i}|<|B_{p_j}|$ for~$i<j$, and that there is an~$i_0$ such that $|B_{p_i}|>k$ for~$i\ge i_0$.

Therefore, for~$i\ge i_0$,
\begin{eqnarray*}
\lefteqn{\frac{\# \{s:d(f^s(x_{\hat \alpha}),f^s(x_{\hat \beta}))<t,
0\le s<n_i\}}{n_i}}{}\\
&&{}\ge \frac{\# \{s:d(f^s(x_{\hat \alpha}),
f^s(x_{\hat \beta}))<d_k, 0\le s<n_i\}}{n_i}\\
&&{}\ge \frac{\# \{s:
\text{$(a_s\ldots a_{s+k})$ lies in $B_{p_i}$}\}}{n_i}\\
&&{}=\frac{p_i-k}{n_i}\\
&&{}\ge
\frac{p_i-k}{p_i(1+2^{-i})+T}\rightarrow 1(i\rightarrow\infty),
\end{eqnarray*}
hence
%%%%%%%%%%%%%%% expression (9)%%%%%%%%%%%%%%%%%%
\begin{equation}\label{frm-ttudfone}
\limsup_{n\to\infty}\frac{\# \{s:d(f^s(x_{\hat \alpha}),f^s(x_{\hat \beta}))<t,
0\le s<n\}}{n}=1.
\end{equation}

Denote the set of all admissible words for the matrix $A$ with the length~ $l$ as $W$. Choose $d_0$ such that
\begin{displaymath}
0<d_0<\min \{d(V_c, V_d)~|~c\ne d \in W\}.
\end{displaymath}
Assume that $\hat{\alpha}=(\hat{a}_0\hat{a}_1 \ldots )\ne \hat{\beta}=
(\hat{b}_0\hat{b}_1 \ldots )\in D_0$. From the definition of $\varphi$ and the construction of~$D_0$ we may assume without loss of generality that there is an increasing sequence~$(r_j)$ in~$\mathbb{N}$ such that combinations $u^{m_{r_j}}$ and $v^{m_{r_j}}$ appear in the same place of $\hat\alpha$, $\hat\beta$, respectively. Denote by $\nu_j$ the original number of~$a'$ in the sequence~$\hat{\alpha}$(or~$\hat\beta$) which is the latest~$a'$ in~$u^{m_{r_j}}$(or~$v^{m_{r_j}}$). Since $\nu _j=m_{r_j}l_1+2^{-r_j}m_{r_j}$ one obtains
\begin{eqnarray*}
\lefteqn{\frac{\#\{s:d(f^s(x_{\hat \alpha}),f^s(x_{\hat \beta}))<d_0, 0\le s<\nu_j\}}{\nu_j}}\\
&&{}\le\frac{\nu_j-\#\{s:(e_s \ldots e_{l_1+s-1}) \in E_{m_{r_j}}, s\ge 0\}}{\nu_j}\\
&&{}=\frac{\nu_j-(m_{r_j}l_1-l_1+1)}{\nu_j}\\
&&{}=\frac{2^{-r_j}m_{r_j}+l_1-1}{m_{r_j}l_1+2^{-r_j} m_{r_j}}
\rightarrow 0(j \rightarrow \infty).
\end{eqnarray*}
From the definition of inferior limit of a sequence, we have
\begin{equation}\label{frm-ttldfzer}
\liminf_{n\to\infty}\frac{\#\{s:d(f^s(x_{\hat \alpha}),f^s(x_{\hat \beta}))<d_0,
0\le s<n\}}{n}=0.
\end{equation}
From (\ref{frm-ttudfone}) and (\ref{frm-ttldfzer}) we can see that any two different elements in~$D_x$ form a distributional chaotic pair and therefore the map~$f$ is distributionally chaotic of type 1.
\end{proof}
\begin{remark}
Consider the map $f$ of Example~\ref{exm-fgdcs}. For the periodic sequence~ $\alpha=(121212\ldots)$, $V_\alpha$ is a singleton. Thus the map $f$ is also distributionally chaotic of type 1.
\end{remark}
\begin{remark}
In Theorem 2 of \cite{kulopro} was proved that a strictly $A$-coupled-expanding map $f$ satisfying two additional assumptions is topologically conjugate to a subshift of finite type in a compact $f$-invariant subset. Moreover, using this facts, in the Remark 3.3 of \cite{kulopro} they said that if assumptions of the Theorem 2 are satisfied, then the $f$ is distributionally chaotic. However, Example~\ref{exm-fgdcs} does not satisfy the assumptions of the Theorem 2 from \cite{kulopro}.
\end{remark}
%\vskip0.3cm
%{\small{\bf Acknowledgements}\quad
%}

\vskip0.5cm\noindent
{\bf References}
\small\vskip0.4cm


\begin{thebibliography}{77}

\bibitem{balsmistef}
F.~Balibrea, J.~Sm\'{\i}tal, M.~\v{S}tef\'ankov\'a, \textit{The three versions of
distributional chaos}, Chaos Solitons Fractals~\textbf{23} (2005), 1581--1583.

\bibitem{blockcop}
L.~Block,  W.~Coppel, \textit{Dynamics on One Dimension}, Lecture Notes in  Mathematics Vol.~1513, Springer, Berlin, 1992.

\bibitem{fortivnc}
G.~L.~Forti, \textit{Various notions of chaos for discrete dynamical systems, a brief
survey}, Aequationes Math.~\textbf{70} (2005), 1--13.

\bibitem{kulopro}
M.~Kulczycki, P.~Oprocha, \textit{Coupled-expanding maps and matrix shifts}, Internat.\ J.\ Bifur.\ Chaos.~\textbf{23} (2013), (6 pages).

\bibitem{liuwangchudcdc}
H.~Liu, L.~Wang, Z.~Chu, \textit{Devaney's Chaos implies distributional chaos in a sequence}, Nonlinear Anal.~\textbf{71} (2009) 6144--6147.

\bibitem{misirewhsh}
M.~Misiurewicz, \textit{Horseshoes for mappings of the interval},
Bull.\ Acad.\ Polon.\ Sci.\ S\'{e}r.\ Sci.\ Math.~\textbf{27} (1979), 167--169.

\bibitem{misirehsbr}
M.~Misiurewicz, \textit{Horseshoes for continuous mappings of an interval}, In:
\textit{Dynamical systems \textup{(}Bressanone, \textup{1978)}}, pp.~125--135, Liguori,
Naples, 1980.

\bibitem{oprochaddc}
P.~Oprocha, \textit{Relations between distributional and Devaney chaos}, Chaos~\textbf{16} (2006), 033112, 5~pp.

\bibitem{oprochadcsc}
P.~Oprocha, \textit{Distributional chaos via semiconjugacy}, Nonlinearity~\textbf{20}
(2007), 2661--2679.

\bibitem{oprochadcr}
P.~Oprocha, \textit{Distributional chaos revisited}, Trans.\ Amer.\ Math.\ Soc.~\textbf{361} (2009), 4901--4925.

\bibitem{rijuentacem}
Ch.~Ri, H.~Ju, \textit{Entropy for $A$-coupled expanding map and chaos}, Preprint.

\bibitem{robinsondynsys}
C.~Robinson, \textit{Dynamical system {\textendash} Stability, Symbolic Dynamics and Chaos}, CRC Press, Boca Raton, 1999.

\bibitem{schweizsmitdc}
B.~Schweizer, J.~Sm\'{\i}tal, \textit{Measures of chaos and a spectral decomposition of
dynamical systems on the interval}, Trans.\ Amer.\ Math.\ Soc.~\textbf{344} (1994),
737--754.

\bibitem{shichenccms}
Y.~Shi, G.~Chen, \textit{Chaos of discrete dynamical systems in complete metric spaces}, Chaos Solitons Fractals~\textbf{22} (2004), 555--571.

\bibitem{shichencep}
Y.~Shi, G.~Chen, \textit{Some new criteria of chaos induced by coupled-expanding maps},
In Proc.~1\textsuperscript{st}~IFAC Conf.\ Analysis and Control of Chaotic systems, Reims, France, pp.~157--162.

\bibitem{shijuchen}
Y.~Shi, H.~Ju, G.~Chen, \textit{Coupled-expanding maps and
one-sided symbolic dynamical  systems}, Chaos Solitons Fractals~\textbf{39} (2009),
2138--2149.

\bibitem{shiyuturb}
Y.~Shi, P.~Yu, \textit{Study on chaos induced by turbulent maps in noncompact sets},
Chaos Solitons Fractals~\textbf{28} (2006), 1165--1180.

\bibitem{shiyusbrep}
Y.~Shi, P.~Yu, \textit{Chaos induced by regular snap-back repellers}, J.\ Math.\ Anal.\
Appl.~\textbf{337} (2008), 1480--1494.

\bibitem{smitalvnc}
J.~Sm\'{\i}tal, \textit{Various notions of chaos, recent results, open problems}, Report
on the Summer Symposium in Real Analysis~XXVI, Real Anal.\ Exchange~2002,
26\textsuperscript{th}~Summer Symposium Conference, Suppl., pp.~81--85

\bibitem{wangxiang}
H.~Wang, J.~Xiang, \textit{Chaos for subshifts of finite type}, Acta Math.\ Sinica~\textbf{21} (2005), 1407--1414.

\bibitem{wanghuanghuan}
L.~Wang, G.~Huang, S.~Huan, \textit{Distributional chaos in a sequence}, Nonlinear
Anal.~\textbf{67} (2007), 2131--2136.

\bibitem{shizhangcem}
X.~Zhang, Y.~Shi, \textit{Coupled-expanding maps for ireducible transition matrices},
Internat.\ J.\ Bifur.\ Chaos.~\textbf{20} (2010), 3769--3783.

\bibitem{zhangshichencem}
X.~Zhang , Y.~Shi, G.~Chen, \textit{Some properties of coupled-expanding maps in compact
sets}, Preprint.

\bibitem{zhangshis}
ZS.~Zhang, \textit{On the shift invariant sets of self-maps}, Acta Math.\
Sin.~\textbf{27} (1984), 564--576.

\end{thebibliography}
\end{document}